\setlist{leftmargin=*}
\newtheorem{thm}{Theorem}[section] 
\newtheorem{lem}[thm]{Lemma} 
\newtheorem{prop}[thm]{Proposition}
\newtheorem{claim}[thm]{Claim} 
\newtheorem{fact}[thm]{Fact}
\newtheorem{obs}[thm]{Observation}
\theoremstyle{definition} 
\theoremstyle{remark} 
\newtheorem{rem}[thm]{Remark}
 \numberwithin{equation}{section}
\newcommand{\RR}{\mathbb{R}}
\newcommand{\CC}{\mathbb{C}}
\newcommand{\QQ}{\mathbb{Q}}
\newcommand{\ZZ}{\mathbb{Z}}
\newcommand{\CR}{\mathcal R} 
\newcommand{\NN}{\mathbb N}
\newcommand{\id}{\mathrm{id}}
\def\TT{\mathbf{T}}
\def\Ran{\mathbb{R}_{\mathrm{an}}}
\def\Zt{\widetilde{Z}}
\def\ZF{\Zt_F}
\def\pr{\operatorname{pr}}
\def\St{\operatorname{Stab}}
\begin{document}

\title[A note on o-minimal flows ]{A note on o-minimal flows and   the Ax--Lindemann--Weierstrass theorem for
  abelian varieties over $\mathbb{C}$}

\author{Ya'acov Peterzil}
\address{University of Haifa}
\email{kobi@math.haifa.ac.il}
\author{Sergei Starchenko}
\address{University of Notre Dame}
\email{sstarche@nd.edu}
\thanks{The authors thank  the US-Israel Binational Science Foundation for its
support}
\thanks{The second author was partially supported by  the NSF Research Grant DMS-150067}
\date{November 14, 2016}
\maketitle

\begin{abstract}
In this short note  we present an elementary  proof of Theorem 1.2 from
\cite{flow}, and also  the Ax--Lindemann--Weierstrass theorem for
  abelian and semi-abelian varieties. The proof uses ideas of Pila,
  Ullmo, Yafaev, Zannier (see e.g. \cite{pz}) and is based on
basic properties of sets definable in o-minimal structures. It does
not use the Pila--Wilkie counting theorem.
\end{abstract}

\section{Introduction}

 In their article
\cite{pz}, Pila and Zannier proposed a new method to tackle problems in Arithmetic
geometry, a method which makes use from model theory, and in particular the theory
of o-minimal structures. Their goal was  to produce a new proof for the
Manin-Mumford conjecture and it went roughly as follows:
 Consider the transcendental uniformizing map $\pi:\CC^n\to A$   for an
$n$-dimensional abelian variety $A$. Given an algebraic variety $V\subseteq A$, with
``many'' torsion points, consider its pre-image $\tilde V=\pi^{-1}(V)$. The analytic
periodic set $\tilde V$, when restricted to a fundamental domain $F\subseteq \CC^n$, is
definable in the o-minimal structure $\RR_{an}$. At the heart of the proposed method
was a theorem by Pila and Wilkie, \cite{pw},  used to conclude that $\tilde V$
contains an algebraic variety $X$. In the last step  of the proof one shows that $X$
is contained in a coset of a $\CC$-linear subspace $L$ of $\CC^n$, with $L\subseteq
\tilde V$. Finally, the Zariski closure of $\pi(L)$ is a coset of an abelian
subvariety of $V$, which is the goal of the theorem. Because of various equivalent
formulations this last step of the argument became known as the
``Ax-Lindemann-Weierstrass'' statement for abelian varieties, which we call here
ALW.

 Following the seminal paper of Pila, \cite{Pila1} on the Andre-Oort
Conjecture for $\CC^n$ it became clear that the Pila-Zannier method was very
effective in attacking other problems in arithmetic geometry. Each such  problem was
broken-up into various parts and the ALW was isolated as a separate statement.
Somewhat surprisingly, despite the fact that  ALW does not seem to have a clear
arithmetic content, Pila found an ingenious way to
 apply the Pila-Wilkie theorem again in order to prove it in the setting of the
 Andre-Oort conjecture (this is sometimes called ``the hyperbolic ALW'').
The method of Pila was applied extensively since then to settle several variants of
ALW (\cite{orr}, \cite{pt}, \cite{kuy}).

Our goal in this note is to show that at least in the setting of semi-abelian
varieties, where the uniformizing map is actually a group homomorphism, the use of
Pila-Wilkie is unnecessary and the proof of ALW and most of its variants becomes
quite elementary. We believe that this simpler approach can clarify the picture
substantially and eventually  yield new results as well.

\subsection{Geometric restatements of ALW for semi-abelian varieties}
\label{sec:added-sergei}

The following theorem follows from  a more general  theorem of   Ax (see
\cite[Theorem3]{ax1}) and often is called the full Ax-Lindemann-Weierstrass Theorem.
The original proof of Ax used algebraic differential methods.

\begin{thm}[Full ALW]  Let $G$ be a connected semi-abelian variety
  over $\CC$, $\TT_G$ the Lie algebra of $G$ and $\exp_G\colon \TT_G\to G$ the exponential map.
Let $W\subseteq \TT_G$ be an irreducible
  algebraic variety and $\xi_1,\dotsc, \xi_k\in \CC(W)$.
If    $\xi_1,\dotsc, \xi_k$ are $\QQ$-linearly independent over
$\CC$ then $\exp_G(\xi_1),\dotsc,\exp_G(\xi_k)$ are algebraically
independent over $\CC(W)$.
\end{thm}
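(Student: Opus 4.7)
The plan is to deduce the full ALW from the o-minimal flow theorem (Theorem~1.2 of \cite{flow}), whose elementary proof is the other main subject of this note, via a Pila--Zannier style reduction that bypasses the Pila--Wilkie counting theorem.

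First, I would recast the statement geometrically. Let $X\subseteq W\times \TT_G^k$ be the irreducible algebraic variety cut out by the graph of $(\xi_1,\dotsc,\xi_k)$, and set $\pi:=\id_W\times \exp_G^k\colon W\times \TT_G^k\to W\times G^k$. Algebraic independence of $\exp_G(\xi_1),\dotsc,\exp_G(\xi_k)$ over $\CC(W)$ is equivalent to the complex Zariski closure of $\pi(X)$ in $W\times G^k$ having dimension $\dim W+k$; assume for contradiction this fails, so $\pi(X)\subseteq Y$ for some proper complex algebraic subvariety $Y\subsetneq W\times G^k$ whose generic fibre over $W$ has dimension strictly less than $k$. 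Fix a semi-algebraic fundamental domain $F\subseteq \TT_G^k$ for the lattice $\Lambda^k$, where $\Lambda=\ker\exp_G$; then $X\cap(W\times F)$ is definable in $\Rae$ and has the same $\pi$-image as $X$, which by assumption lies in~$Y$.

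Next, apply the o-minimal flow theorem to $X\cap(W\times F)$ under the covering $\pi$. This yields a description of the Euclidean closure $\overline{\pi(X)}$ in $W\times G^k$ as a finite union of pieces, each of which is a bundle over a $W$-algebraic base whose fibres are cosets of real subtori of $G^k$ with tangent spaces $\QQ$-rational real subspaces of $\TT_G^k$ with respect to $\Lambda^k$. The crux is then promoting real to complex: because $\overline{\pi(X)}\subseteq Y$ sits inside a complex algebraic subvariety, the complex Zariski closure of each real subtorus coset also lies in $Y$, and this closure is a coset of a complex algebraic subgroup of $G^k$ whose tangent space is a $\CC$-linear, $\Lambda^k$-rational subspace $L\subseteq \TT_G^k$. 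A nonzero such $L$ of positive codimension then yields integers $n_1,\dotsc,n_k$, not all zero, and a constant $c\in\TT_G$ with $\sum n_i\xi_i\equiv c$ on $W$, contradicting the assumed $\QQ$-linear independence modulo~$\CC$.

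The main obstacle is the real-to-complex promotion combined with ensuring $L$ is nonzero of positive codimension. Concretely, one must prove that the complex Zariski closure of a real subtorus coset in $G^k$ is a coset of a complex algebraic subgroup with $\Lambda^k$-rational tangent space, deploy the assumption that $Y$ has generic fibre over $W$ of dimension $<k$ to guarantee $L$ is not all of $\TT_G^k$, and carefully track the $\Lambda^k$-rational structure so that the resulting relation on the $\xi_i$ has integer (rather than merely real) coefficients.
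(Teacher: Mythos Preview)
Your route differs substantially from the paper's, and it has real gaps. The paper does \emph{not} pass through a description of the Euclidean closure $\overline{\pi(X)}$ at all. Instead it proves the geometric form (Proposition~\ref{prop:gen}) directly: for $H=V\times G$ and $Y\subseteq \TT_H$ irreducible algebraic, one looks at the Zariski closure $Z$ of $\exp_H(Y)$ and its stabilizer $V_0\times B$. The elementary set $\Sigma_F(Y)$ of Section~\ref{sec:o-minimal-flows} lands in the stabilizer (Claim~\ref{claim:key}), and o-minimality forces $\Sigma_F(Y)\subseteq \TT_B+S$ for a finite $S$ (Proposition~\ref{prop:key}), so $Y\subseteq \TT_B+S+F$. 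For the semi-abelian case one decomposes $\TT_G=\RR^k\oplus i\RR^k\oplus L_a$, absorbs the compact pieces into a compact $K$, and gets $Y\subseteq M+K$ with $M=V+\TT_B+i\RR^k$ a real subspace. The real-to-complex step is then a one-line observation: an irreducible complex variety contained in a translate of a real subspace $M$ lies in a translate of $M\cap iM$; a short computation using that algebraic subtori have $\ZZ^k$-rational tangent spaces gives $M\cap iM=V+\TT_B$, whence $Z\subseteq V\times(p\cdot B)$ and $Z=Z_V\times(p\cdot B)$.

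Your sketch, by contrast, invokes the full closure theorem of \cite{flow} rather than Theorem~1.2; the latter only says the stabilizer is infinite, not that $\overline{\pi(X)}$ is a finite union of real-subtorus bundles, so you are importing a much heavier result than the one whose elementary proof this note supplies. More seriously, two steps are not actually carried out. First, in the semi-abelian case $\Lambda$ is not a full lattice, there is no bounded fundamental domain, and the flow description you quote is for the compact (abelian) case; handling the $i\RR^k$ direction is exactly where the paper's work lies. Second, your endgame is unclear: a $\Lambda^k$-rational complex subspace $L\subsetneq \TT_G^k$ of positive codimension does not in general produce a relation $\sum n_i\xi_i=c$ with $n_i\in\ZZ$ and $c\in\CC$ unless $\dim_\CC\TT_G=1$; for higher-dimensional $G$ the resulting linear constraint mixes coordinates of the $\xi_i$ and does not directly contradict $\QQ$-linear independence modulo $\CC$. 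The paper sidesteps this entirely by working with the stabilizer $B$ rather than with rational hyperplanes.
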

If in above theorem we change the conclusion to  ``algebraically
independent over $\CC$'' then we get a weaker statement that often is
called Ax-Lindemann-Weierstrass theorem (ALW theorem for short).

It is not hard to see that both full ALW and ALW theorems can be
interpreted geometrically (see  e.g. \cite{ts} for more details).

\begin{thm}[ALW, Geometric Version]
 Let $G$ be a connected semi-abelian variety
  over $\CC$, $\TT_G$ the Lie algebra of $G$ and $\exp_G\colon \TT_G\to G$ the exponential map.

Let $X\subseteq \TT_G$ be an irreducible
  algebraic variety and $Z\subseteq G$  the Zariski closure of
  $\exp_G(X)$. Then  $Z$ is a translate of an algebraic subgroup of
  $G$.
\end{thm}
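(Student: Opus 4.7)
The strategy follows the Pila--Ullmo--Yafaev--Zannier template: combine a stabilizer reduction in $G$ with a complex-linear-stabilizer calculation in $\TT_G$, and bridge the two using an o-minimal input---namely Theorem~1.2 of \cite{flow}, in place of the Pila--Wilkie counting theorem. Let $\Lambda := \ker(\exp_G) \subseteq \TT_G$, a discrete subgroup, so that $G \cong \TT_G/\Lambda$ as complex Lie groups. Set $\tilde Z := \exp_G^{-1}(Z)$, a $\Lambda$-periodic complex-analytic subset of $\TT_G$, and fix a fundamental domain $F \subseteq \TT_G$ for $\Lambda$ chosen so that $\tilde Z \cap F$ is definable in $\Ran$ (when $G$ is abelian) or in $\Rae$ (in general).

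The first reduction is to trivialize the algebraic stabilizer. Let $H := \St(Z) = \{g \in G : g + Z = Z\}$, a closed (hence algebraic) subgroup of the commutative group $G$. Then $Z$ is $H$-invariant, so $Z = q^{-1}(q(Z))$ under the quotient $q : G \to G/H$; the image $q(Z)$ is an irreducible subvariety of the semi-abelian variety $G/H$ with trivial stabilizer, and it equals the Zariski closure of $\exp_{G/H}(dq(X))$. Replacing $(G, Z, X)$ by $(G/H, q(Z), dq(X))$, we may assume $\St(Z) = \{0\}$, and the goal reduces to showing $\dim Z = 0$. The second reduction is to trivialize the analytic stabilizer:
\[
L := \{v \in \TT_G : v + \tilde Z = \tilde Z\}.
\]
Because $\tilde Z$ is complex-analytic and $\Lambda$-periodic, $L$ is a complex-linear subspace of $\TT_G$; its image $\exp_G(L)$ lies in $\St(Z) = \{0\}$, so $L \subseteq \Lambda$, and discreteness of $\Lambda$ forces $L = \{0\}$.

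With both stabilizers trivial, apply Theorem~1.2 of \cite{flow} to the definable set $\tilde Z \cap F$. That theorem precisely links the complex-linear structure of $\tilde Z$ in $\TT_G$ with the Zariski closure of $\exp_G(\tilde Z \cap F)$ in $G$, and in the normalized setting ($\St(Z) = L = \{0\}$) forces $\dim \tilde Z = 0$, hence $\dim Z = 0$. Irreducibility then makes $Z$ a point, and unwinding the quotient by $H$ shows that the original $Z$ is a translate of the algebraic subgroup $H$ of $G$, as required.

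\textbf{Main obstacle.} The substance of the argument lies entirely in the appeal to Theorem~1.2 of \cite{flow}; everything else is the by-now-standard Pila--Zannier bookkeeping (stabilizer quotient plus discreteness of $\Lambda$). The real task is to bridge real-analytic, o-minimal data about $\tilde Z \cap F$ with complex-algebraic output about its Zariski closure, showing that no additional Zariski closure can arise beyond what is forced by the complex-linear hull of $\tilde Z$. Passing from the abelian to the semi-abelian case only requires enlarging the o-minimal structure from $\Ran$ to $\Rae$ (to handle the fundamental domain of a non-compact $\Lambda$) and adds no conceptual difficulty.
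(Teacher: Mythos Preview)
Your high-level template (reduce modulo the algebraic stabilizer, then invoke the o-minimal flow result) is close in spirit to the paper's argument, but the proposal has two genuine gaps at the crucial step.

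First, the appeal to Theorem~1.2 of \cite{flow} is both misapplied and left as a black box. In the paper that theorem (and its slight generalization here) takes as input a definable $X\subseteq \TT_G$ and says: if $X$ is not covered by finitely many $\Lambda$-translates of $F$, then $\St_G(Z)$ is infinite, where $Z$ is the Zariski closure of $\exp_G(X)$. You apply it instead to $\tilde Z\cap F$; but $\tilde Z\cap F\subseteq F$ is already covered by a single translate of $F$, so the hypothesis is vacuous and nothing follows. The correct move, after your quotient so that $\St(Z)=\{0\}$, is to apply the contrapositive to (the image of) $X$ itself: one concludes that $X$ lies in finitely many $\Lambda$-translates of $F$. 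The detour through the analytic stabilizer $L$ is both unnecessary and incorrect as stated: $\Lambda\subseteq L$ always, so $L$ is never $\{0\}$; at best one gets $L^0=\{0\}$, and even the claim that $L$ is a $\CC$-linear subspace would need proof. In any case you never use $L$ afterwards.

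Second, your assertion that the semi-abelian case ``adds no conceptual difficulty'' beyond passing to $\Rae$ is where the real content is missing. In the abelian case $F$ is relatively compact, so ``$X$ contained in finitely many translates of $F$'' immediately gives $X$ bounded, hence a point. In the semi-abelian case the large domain has the shape $F_0+i\RR^k+F_a$ with an unbounded $i\RR^k$ coming from the torus factor, so boundedness fails. The paper handles this in Lemma~\ref{lem:lemma-1}: one first uses that the $\RR$-span of an irreducible complex analytic set is a $\CC$-subspace to pass from $X\subseteq \TT_B+i\RR^k+K$ to $X-h\subseteq (\TT_B+i\RR^k)\cap(\TT_B+\RR^k)$, and then a computation exploiting that a subtorus of $(\CC^*)^k$ has Lie algebra of the form $E\oplus iE$ with $E\subseteq\RR^k$ to show this intersection equals $\TT_B$. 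Only then does one get $X$ inside a single coset of $\TT_B$ and hence $Z$ a coset of $B$. None of this is captured by merely enlarging the o-minimal structure.
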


We can also restate full ALW.

\begin{thm}[Full ALW, Geometric Version]
Let $G$ be a connected semi-abelian variety
  over $\CC$, $\TT_G$ the Lie algebra of $G$, $\exp_G\colon \TT_G\to
  G$ the exponential map,
and $\pi\colon \TT_G\to \TT_G\times G$ be the map $\pi(z)=(z,\exp_G(z))$.

Let $X\subseteq \TT_G$ be an irreducible
  algebraic variety
and let $Z\subseteq \TT_G\times G$ be the Zariski closure of
  $\pi(X)$. Then  $Z=X\times B$, where B is a translate of an algebraic subgroup of
  $G$.
\end{thm}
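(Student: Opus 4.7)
The plan is to deduce the Full ALW (Geometric Version) from the ALW (Geometric Version) just stated, via fiber analysis combined with the translation-invariance inherited from $\pi$. Let $B'$ denote the Zariski closure of $\exp_G(X)$; by ALW, $B'$ is a translate of an algebraic subgroup of $G$. Since $\pi(X) \subseteq X \times B'$ and the latter is Zariski closed, $Z \subseteq X \times B'$. Our task is to upgrade this inclusion to equality.

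Because $\exp_G$ is a group homomorphism (as $G$ is semi-abelian), so is $\pi\colon \TT_G \to \TT_G \times G$. Fixing $x_0 \in X$ and replacing $X$ by $X - x_0$ and $Z$ by $Z - \pi(x_0)$, we reduce to the case where $0 \in X$ and $B := B'$ is an algebraic subgroup of $G$ (it contains $\exp_G(0) = 0_G$). Since $Z \subseteq X \times B$ and both sides are irreducible, it now suffices to prove the dimension equality $\dim Z = \dim X + \dim B$; equivalently, that for a generic $x \in X$ the fiber $Z_x := \{g \in G : (x,g) \in Z\}$ coincides with $B$. The inclusion $Z_x \subseteq B$ is automatic; the content of Full ALW is the reverse.

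The main obstacle is exactly this fiber equality: no purely algebraic argument rules out a proper subvariety of $X \times B$ that happens to contain the analytic graph $\pi(X)$. The resolution is the o-minimal flow theorem (Theorem 1.2 of \cite{flow}, which the paper proves): on a fundamental domain $F \subseteq \TT_G$ for $\Lambda := \ker \exp_G$, the restriction $\pi|_F$ is definable in $\Ran$, and the flow theorem controls the Zariski closure of such definable analytic images of algebraic sets. Using this, together with the group structure of $B$, one shows that $Z$ is invariant under the translation action of $\{0\} \times B$ on $\TT_G \times G$. Since $\pi(X) \subseteq Z$ and $\exp_G(x) + B = B$ for every $x \in X$ (because $\exp_G(X) \subseteq B$), translating $\pi(X)$ by $\{0\} \times B$ fills out all of $X \times B$, giving $X \times B \subseteq Z$ and hence $Z = X \times B$, as required.
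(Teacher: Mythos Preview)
Your reduction correctly isolates the heart of the matter: once ALW gives that $B:=\overline{\exp_G(X)}^{\mathrm{Zar}}$ is a coset of an algebraic subgroup, the inclusion $Z\subseteq X\times B$ is immediate, and the whole content of Full ALW is the invariance of $Z$ under $\{0\}\times B$. But this invariance is precisely the step you do not prove. You write that ``using [the o-minimal flow theorem], together with the group structure of $B$, one shows that $Z$ is invariant under $\{0\}\times B$'' --- yet the flow theorem you cite (Theorem~1.2 of \cite{flow}, i.e.\ Proposition~3.3 here) only asserts that the stabilizer of $Z$ is \emph{infinite} when $X$ is not covered by finitely many $\Lambda$-translates of $F$. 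It gives no mechanism for identifying that stabilizer with the specific group $B$ you have already produced via ALW. An infinite stabilizer could a priori be a one-dimensional subgroup of $\TT_G\times G$ having nothing to do with $\{0\}\times B$.

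The paper's argument proceeds in the opposite logical order, and this is not cosmetic. It \emph{defines} $B$ as the $G$-component of $\St_H(Z)$ (using that any algebraic subgroup of the product of a vector group with a semi-abelian variety splits), so the $\{0\}\times B$-invariance of $Z$ is tautological. The work is then to show that this $B$ is large enough that $Z\subseteq V\times(p\cdot B)$; this is done via Proposition~3.4 (which bounds $Y$ inside $\TT_B+S+F$), a decomposition of $\TT_G$ adapted to the torus part of $G$, and a complex-analytic argument (the $\RR$-span of an irreducible complex analytic set is a $\CC$-subspace) to kill the $i\RR^k$ direction. Only after all this does one recover that $B$ coincides with the Zariski closure of $\exp_G(\pr_{\TT_G}(Y))$. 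Your attempt to first pin down $B$ via ALW and then verify it stabilizes $Z$ does not shortcut any of this: establishing the stabilizer statement requires essentially the full proof of Proposition~4.1, and ALW by itself contributes nothing toward it.
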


\section{Preliminaries}
\label{sec:setting-1}

We work in an o-minimal expansion $\CR$ of the real field $\RR$, and
by definable we always mean  $\CR$-definable (with parameters).
The only property of o-minimal structures that we need is that every
definable discrete subset is finite.

If $V$ is a finite dimensional vector space over $\RR$ and $X$ a
subset of $V$ then, as usual, we say that $X$ is \emph{definable}
if it becomes definable after fixing a basis for $V$ and identifying $V$
with $\RR^n$.  Clearly this notion does not depend on a choice of
basis.

Let  $\pi \colon V\to G$  be a group homomorphisms, where
$V$ is a finite dimensional vector space over $\RR$ and $G$  a
connected commutative algebraic group over $\CC$.  We denote the group
operation of $G$ by $\cdot\,$.

 Let
$\Lambda=\pi^{-1}(e)$.  We say that a subset $F\subseteq V$ is \emph{
a large domain for $\pi$}  if $F$ is a
connected open subset of $V$ with $V=F+\Lambda$. If in addition  the
restriction of $\pi$ to $F$ is definable then we say that $F$ is
\emph{a definable} large domain for $\pi$

\begin{rem}\label{rem:compact} In the above setting if $\pi$ is real analytic and
  $\Lambda$ is a lattice in $V$ then $V/\Lambda$ is compact and there is a
  relatively compact large domain for $\pi$ definable in
  $\Ran$.

\end{rem}

\section{Key observations}
\label{sec:o-minimal-flows}
In this section we fix a
 a finite dimensional $\CC$-vector space $V$, a connected
commutative algebraic group $G$ over $\CC$ and $\pi\colon V\to G$ a complex
analytic group homomorphism.  We assume that $\Lambda=\pi^{-1}(e)$ is
a discrete subgroup of $V$ and  that $\pi$ has a definable
large domain $F$.

Let $X$ be a definable connected real analytic submanifold of $V$ and
let $Z$  be the Zariski
closure of $\pi(X)$ in $G$.

Let $\Zt=\pi^{-1}(Z)$ and  $\ZF=\Zt\cap  F$. The set  $\Zt$ is a
complex analytic $\Lambda$-invariant subset of $V$ and  $\ZF$ is a definable
subset of $F$.

Let
\begin{equation}
  \label{eq:2}
\Sigma_F(X)=\{ v\in V \colon v+X\cap F \neq \emptyset \text{ and }
 v+X\cap F \subseteq \ZF \}.
\end{equation}
Clearly $\Sigma_F(X)$ is a definable subset of $V$.

\medskip
The following is an elementary observation.
\begin{obs}
\label{obs:1}
\begin{enumerate}
\item
If $\lambda\in \Lambda$  and  $\lambda+F\cap X \neq
  \emptyset$  then $-\lambda\in \Sigma_F(X)$.
In particular $X \subseteq F -(\Sigma_F(X)\cap \Lambda)$.
\item
If $v$ is in $\Sigma_F(X)$ then  $v+X\subseteq \Zt$.
\end{enumerate}
\end{obs}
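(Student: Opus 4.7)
The plan is to prove (1) by unwinding the definitions and (2) by an identity-principle argument.

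For (1), I would fix $\lambda\in\Lambda$ with $(\lambda+F)\cap X\neq\emptyset$, pick $x\in X$ and $f\in F$ with $x=\lambda+f$, and check the two defining conditions of $\Sigma_F(X)$ for $-\lambda$. Nonemptiness: $f=-\lambda+x\in(-\lambda+X)\cap F$. Containment in $\ZF$: for every $x'\in X$, the identity $\pi(\lambda)=e$ gives $\pi(-\lambda+x')=\pi(x')\in\pi(X)\subseteq Z$, so $-\lambda+x'\in\Zt$; intersecting with $F$ yields $(-\lambda+X)\cap F\subseteq\ZF$. The ``in particular'' clause follows by writing an arbitrary $x\in X$ as $x=f+\lambda$ with $f\in F$, $\lambda\in\Lambda$ (possible since $V=F+\Lambda$), applying the just-proved statement to conclude $-\lambda\in\Sigma_F(X)\cap\Lambda$, and noting $x=f-(-\lambda)\in F-(\Sigma_F(X)\cap\Lambda)$.

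For (2), I would fix $v\in\Sigma_F(X)$ and set $S=\{x\in X\colon v+x\in\Zt\}$. Since $\pi$ is complex analytic and $Z$ is an algebraic, hence complex analytic, subvariety of $G$, the preimage $\Zt=\pi^{-1}(Z)$ is a complex analytic---and in particular a real analytic---subset of $V$. The translation $x\mapsto v+x$ is real analytic, so $S$ is a real analytic subset of the connected real analytic manifold $X$. The defining inclusion $(v+X)\cap F\subseteq\ZF$ says exactly that $\{x\in X\colon v+x\in F\}\subseteq S$; this latter subset of $X$ is open (because $F$ is open in $V$) and nonempty (because $(v+X)\cap F\neq\emptyset$). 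Hence $S$ has nonempty interior in $X$.

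The key ingredient is the identity principle for real analytic sets: a real analytic subset of a connected real analytic manifold with nonempty interior must equal the whole manifold (such a set is open by hypothesis, and the locus of interior points is closed by the identity theorem for real analytic functions on local charts, so by connectedness of $X$ it is all of $X$). Applied to $S\subseteq X$ this gives $S=X$, i.e.\ $v+X\subseteq\Zt$. The main obstacle is this step in (2), which requires viewing $\Zt$ as a genuine analytic set rather than a mere definable closed set; part (1) is straightforward set-theoretic bookkeeping, and the o-minimality of $\CR$ plays no role in this observation---it will only be used later to guarantee that $\Sigma_F(X)$ is definable.
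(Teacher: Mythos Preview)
Your proof is correct. The paper does not actually prove Observation~\ref{obs:1}, labeling it ``an elementary observation,'' but your argument is exactly the intended one: part~(1) is pure bookkeeping from the definitions, and part~(2) is an analytic-continuation/identity-principle argument exploiting that $X$ is a \emph{connected} real analytic manifold and $\Zt$ is an analytic set---this is precisely the ingredient the paper alludes to in Remark~\ref{rem:complex}, where it extends the observation to irreducible varieties by passing to the connected manifold $X_{\mathrm{reg}}$.
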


As a consequence we have the following claim.

\begin{claim}\label{claim:key}
$\pi(\Sigma_F(X))\subseteq \St_G(Z)=\{ g\in G \colon g{\cdot}Z=Z\}$.
\end{claim}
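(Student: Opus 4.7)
The plan is to fix $v\in \Sigma_F(X)$, set $g=\pi(v)\in G$, and show directly that $g\cdot Z=Z$. The entry point is part (2) of Observation~\ref{obs:1}, which gives $v+X\subseteq \Zt$ for free. Since $\pi$ is a group homomorphism and $\Zt=\pi^{-1}(Z)$, applying $\pi$ to both sides yields $g\cdot\pi(X)=\pi(v+X)\subseteq Z$.

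Next, I would pass to Zariski closures inside $G$. Left multiplication by $g$ is an automorphism of the algebraic variety $G$, hence a homeomorphism for the Zariski topology, so it commutes with taking Zariski closures. Since by definition $Z$ is the Zariski closure of $\pi(X)$, the previous inclusion upgrades to $g\cdot Z\subseteq Z$.

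The only genuinely nontrivial point, and the one I expect to be the main (if mild) obstacle, is strengthening this inclusion to equality: a priori $g\cdot Z\subseteq Z$ does not force $g\cdot Z=Z$, even though left multiplication by $g$ is an isomorphism, because $Z$ need not be irreducible or equidimensional. The standard remedy is to iterate and use Noetherianity. Consider the descending chain $Z\supseteq g\cdot Z\supseteq g^{2}\cdot Z\supseteq\cdots$ of Zariski closed subsets of $G$. Because the Zariski topology on $G$ is Noetherian, this chain stabilizes, so $g^{n}\cdot Z=g^{n+1}\cdot Z$ for some $n\ge 0$. Multiplying both sides by $g^{-n}$ gives $Z=g\cdot Z$, i.e., $g\in \St_G(Z)$. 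Since $v\in \Sigma_F(X)$ was arbitrary, this proves $\pi(\Sigma_F(X))\subseteq \St_G(Z)$.
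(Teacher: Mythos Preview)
Your proof is correct and follows essentially the same route as the paper's: invoke Observation~\ref{obs:1}(2) to get $v+X\subseteq\Zt$, apply $\pi$ and pass to Zariski closures to obtain $\pi(v)\cdot Z\subseteq Z$. The only difference is that the paper stops here and simply asserts $\pi(v)\in\St_G(Z)$, whereas you explicitly justify the upgrade from inclusion to equality via the Noetherian descending-chain argument; your added care is appropriate, since this step is not entirely automatic.
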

\begin{proof}
If $v$ is in $\Sigma_F(X)$ then by Observation \ref{obs:1}(2) we have  $X\subseteq \Zt-v$,
and hence $\pi(X)\subseteq \pi(v)^{-1}{\cdot}\pi(\Zt)=\pi(v)^{-1}{\cdot}Z$.  Since $Z$ is the Zariski
closure of $\pi(X)$  and $\pi(v)^{-1}{\cdot}Z$ is a subvariety of $G$  we have  $Z\subseteq \pi(v)^{-1}{\cdot}Z$, hence
$\pi(v)$ is in the stabilizer of $Z$.
  \end{proof}

\begin{rem}\label{rem:complex}
 Both Observation \ref{obs:1} and Claim \ref{claim:key}
  hold for a complex  irreducible algebraic subvariety $X$  of $V$. It
  can be done either by a direct argument  or
  replacing $X$ with the set $X_\mathrm{reg}$ of smooth points on $X$
  and using the fact that $X_\mathrm{reg}$ is a  connected complex
  submanifold of $V$ that  is dense  in $X$.
\end{rem}

We  deduce a slight generalization of Theorem 1.2 from
\cite{flow}.

\begin{prop} Let $\pi\colon V\to G$ be a complex
analytic group homomorphism from a finite dimensional  $\CC$-vector space
$V$ to  a connected commutative algebraic group $G$  over
$\CC$. Let $\Lambda=\pi^{-1}(e)$.
Assume  $\pi$ has   a large definable domain
$F$.

Let $X\subseteq V$ be a definable connected real analytic
submanifold (or an irreducible complex algebraic subvariety) and $Z\subseteq G$ the Zariski closure of $\pi(X)$ in $G$.

If  $X$ is not covered by finitely many
  $\Lambda$-translate of $F$ then $\St_G(Z)$  is infinite.
\end{prop}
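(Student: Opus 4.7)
The plan is to combine Claim~\ref{claim:key} with a finiteness argument to show that the definable set $\Sigma_F(X)$ projects to an infinite subset of $G$. Since Claim~\ref{claim:key} already gives $\pi(\Sigma_F(X))\subseteq \St_G(Z)$, it suffices to prove that $\pi(\Sigma_F(X))$ is infinite.

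First I would translate the hypothesis on $X$ into a statement about $\Sigma_F(X)\cap\Lambda$. By Observation~\ref{obs:1}(1),
\[
X \subseteq F - (\Sigma_F(X)\cap \Lambda),
\]
so if $\Sigma_F(X)\cap \Lambda$ were finite, then $X$ would be covered by finitely many $\Lambda$-translates of $F$, contradicting the assumption. Hence $\Sigma_F(X)\cap \Lambda$, and in particular $\Sigma_F(X)$ itself, is infinite.

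Next I would argue that $\pi(\Sigma_F(X))$ is infinite by contradiction. Suppose $\pi(\Sigma_F(X))$ is finite, say equal to $\{g_1,\dotsc,g_n\}$, and pick preimages $v_i\in\pi^{-1}(g_i)$. Then
\[
\Sigma_F(X) \subseteq \bigcup_{i=1}^n (v_i+\Lambda),
\]
which is a finite union of translates of the discrete set $\Lambda$, hence a discrete subset of $V$. Since $\Sigma_F(X)$ is definable (as noted after \eqref{eq:2}), the basic o-minimality property stated in Section~\ref{sec:setting-1} — every definable discrete set is finite — forces $\Sigma_F(X)$ to be finite, contradicting the previous paragraph.

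There is no real obstacle here; the statement is essentially a bookkeeping consequence of the two ingredients already isolated (Observation~\ref{obs:1} and Claim~\ref{claim:key}) together with the one o-minimality input being used in this note. The only step that requires any care is verifying that a finite union of cosets of the discrete group $\Lambda$ is discrete, which is immediate.
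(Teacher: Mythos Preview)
Your argument is correct and follows essentially the same route as the paper: use Observation~\ref{obs:1}(1) to see that $\Sigma_F(X)$ is infinite, then argue that if $\pi(\Sigma_F(X))$ were finite the definable set $\Sigma_F(X)$ would be discrete, contradicting o-minimality, and conclude via Claim~\ref{claim:key}. You have simply spelled out in more detail the step the paper compresses into one parenthetical remark.
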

\begin{proof}
  If  $X$ is not covered by finitely many
  $\Lambda$-translate of $F$, then by Observation \ref{obs:1}(1) the set
  $\Sigma_F(X)$ is infinite. Since it is also definable,
  $\pi(\Sigma_F(X))$ must be
  also infinite (otherwise $\Sigma_F(X)$ would be an infinite definable discrete subset
  contradicting o-minimality).
\end{proof}

The following proposition is a key in our proof of ALW.

\begin{prop}\label{prop:key}
 Let $G$ be a connected commutative algebraic group over
  $\CC$, $\TT_G$ the Lie algebra of $G$, and $\exp_G\colon \TT_G\to G$
  the exponential map.  Assume $\exp_G$ has a definable large
  domain $F$.

Let $X\subseteq \TT_G$ be a definable real analytic submanifold (or an
irreducible algebraic subvariety), and $\TT_B
< \TT_G$  the Lie algebra of the stabilizer $B$ of the Zariski
closure of $\exp_G(X)$ in $G$.

Then there is a finite set $S\subset \TT_G$ such that
\[ X\subseteq \TT_B+ S+F.\]
\end{prop}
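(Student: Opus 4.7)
The plan is to push the containment from Claim~\ref{claim:key} through the quotient algebraic group $H := G/B^\circ$, where $B^\circ$ is the identity component of $B = \St_G(Z)$, and then invoke o-minimality to force finiteness. Let $q\colon G\to H$ be the quotient homomorphism and $Q\colon \TT_G \to \TT_H$ its differential, so that $\ker Q = \TT_B$ and $\exp_H\circ Q = q\circ \exp_G$. Write $\Lambda_H := \ker\exp_H$. Since $H$ is a connected complex Lie group, $\exp_H$ is a local biholomorphism at $0$, so $\Lambda_H$ is discrete in $\TT_H$.

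By Observation~\ref{obs:1}(1), $X \subseteq F - (\Sigma_F(X)\cap \Lambda)$, so it is enough to exhibit a finite set $S_0\subset \TT_G$ with $\Sigma_F(X)\cap\Lambda \subseteq \TT_B + S_0$; equivalently, it is enough to show that $Q(\Sigma_F(X))$ is a finite subset of $\TT_H$. Claim~\ref{claim:key} gives $\exp_G(\Sigma_F(X))\subseteq B$, hence
\[
\exp_H\bigl(Q(\Sigma_F(X))\bigr) \;=\; q\bigl(\exp_G(\Sigma_F(X))\bigr) \;\subseteq\; q(B) \;=\; B/B^\circ,
\]
a \emph{finite} set. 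Consequently $Q(\Sigma_F(X))$ is contained in $\exp_H^{-1}$ of a finite subset of $H$, i.e.\ in a finite union of cosets of $\Lambda_H$.

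The remaining step is the same o-minimality trick as in the preceding proposition: the set $Q(\Sigma_F(X))$ is definable (since $Q$ is $\RR$-linear and $\Sigma_F(X)$ is definable), and by the previous paragraph it lies in a discrete set, so it is itself discrete; the one o-minimality property recalled in Section~\ref{sec:setting-1} then forces it to be finite. Lifting a finite set of representatives to $\TT_G$ yields $S_0$, so that $\Sigma_F(X)\cap \Lambda \subseteq \TT_B + S_0$, and Observation~\ref{obs:1}(1) gives
\[
X \;\subseteq\; F + \TT_B - S_0 \;=\; \TT_B + S + F
\]
with $S := -S_0$.

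I do not anticipate a real obstacle. The only point needing any care is the discreteness of $\Lambda_H$, which however holds for an arbitrary connected complex Lie group (via the inverse function theorem applied to $\exp_H$ at $0$) and so does not require $G$ to be semi-abelian. Both cases of the statement (real analytic submanifold or irreducible complex algebraic subvariety) are handled uniformly, using Remark~\ref{rem:complex} for the latter.
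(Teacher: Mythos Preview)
Your argument is correct and follows essentially the same route as the paper. Both proofs use Claim~\ref{claim:key} to get $\exp_G(\Sigma_F(X))\subseteq B$, pull back through finitely many $B^\circ$-cosets, and then invoke o-minimality to extract a finite set $S$ before applying Observation~\ref{obs:1}(1). The only cosmetic difference is that the paper works upstairs in $\TT_G$, writing $\Sigma_F(X)\subseteq \TT_B + \bigcup_i (h_i+\Lambda)$ and appealing to o-minimality directly, whereas you pass to the quotient $H=G/B^\circ$ and argue that $Q(\Sigma_F(X))$ is a definable subset of the discrete set $\exp_H^{-1}(B/B^\circ)$; your formulation makes the ``definable discrete $\Rightarrow$ finite'' step slightly more explicit, but the content is identical.
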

\begin{proof}
Let $\Lambda=\exp_G^{-1}(e)$. It is a discrete subgroup of $\TT_G$.

Let
 $Z\subseteq G$ be the Zariski closure of
$\exp_G(X)$ and  $B$ be the stabilizer of $Z$ in $G$.

We define $\Sigma_F(X)$ as in \eqref{eq:2}.

Let $B^0$ be the connected component of $B$. It is an algebraic
subgroup of $G$, has a finite index in $B$ and with
$\exp_G(\TT_B)=B^0$, where $\TT_B< \TT_G$ is the Lie algebra of $B$.

 We choose $b_1,\dotsc,b_n\in B$ with
$B=\bigcup_{i=1} b_i{\cdot}B^0$, and also choose $h_1,\dotsc,h_n\in
\TT_G$ with $\exp_G(h_i)=b_i$.  We have
\[ \exp_G\Bigl(\bigcup_{i=1}^n (h_i+\TT_B)\Bigr)= B, \]
hence by Claim \ref{claim:key}
$ \exp_G(\Sigma_F(X))  \subseteq  \exp_G\Bigl(\bigcup_{i=1}^n
  (h_i+\TT_B)\Bigr) $ and
\[ \Sigma_F(X) \subseteq \TT_B + \Bigl(\bigcup_{i=1}^n
  (h_i+\Lambda)\Bigr).
\]
Since  $\Lambda$ is a discrete subgroup of $\TT_G$, the set
$\bigcup_{i=1}^n  (h_i+\Lambda)$ is a discrete subset of $\TT_G$. By
o-minimality, since $\Sigma_F(X)$ is definable we obtain that there is
a finite set $S\subseteq \bigcup_{i=1}^n  (h_i+\Lambda)$ with
$\Sigma_F(X) \subseteq \TT_B + S$.
The proposition now follows from Observation \ref{obs:1}(1).
\end{proof}

\begin{rem}
The above proposition immediately implies ALW Theorem for abelian varieties.
Indeed let $G$ be an abelian variety, $\exp_G\colon \TT_G\to G$ the
exponential map, $X\subseteq \TT_G$ an irreducible algebraic
subvariety, $B<G$ the stabilizer of the Zariski closure of $\exp_G(X)$
and $\TT_B < \TT_G$ the Lie algebra of $B$.

Since $G$ is compact, there is a relatively compact fundamental domain
$F$ for $\exp_G$ definable in the o-minimal structure $\Ran$.

Using Proposition \ref{prop:key}, we have that $X\subseteq \TT_B+S+F$,
for some finite $S\subset \TT_G$.  Since $F$ is relatively compact
we obtain  that $X\subseteq \TT_B +K$ for some compact
$K\subseteq \TT_G$.

Let $L$ be a $\CC$-linear subspace of $\TT_G$ complementary to $\TT_B$.
The projection of $X$ to $L$ along $\TT_B$ is bounded. Since $X$ is an
irreducible variety, it has to be a point. It follows then that
$X\subseteq \TT_B+h$ for some $h\in \TT_G$ and $\exp_G(X) \subseteq
\exp_G(h){\cdot}B$.
\end{rem}

\section{Full ALW for semi-abelian varieties}
\label{sec:full-alw-semi}

In this section we prove a general statement that implies full ALW Theorem
and hence also ALW Theorem for semi-abelian varieties.

\begin{prop}\label{prop:gen}
Let $G$ be a connected semi-abelian variety
  over $\CC$, $\TT_G$ the Lie algebra of $G$, $\exp_G\colon \TT_G\to
  G$ the exponential map, $V$ a vector group over $\CC$ and $\pi\colon
  V\oplus \TT_G \to V\times G$  the map  $\pi=\id_V\times \exp_G$.

Let $Y\subseteq V\oplus \TT_G$ be an irreducible
  algebraic variety
and $Z\subseteq \TT_G\times G$  the Zariski closure of
  $\pi(Y)$. Then  $Z=Z_V\times Z_G$,  where  $Z_V$ is a subvariety of
  $V$ and $Z_G$  a translate of an algebraic subgroup of
  $G$.
\end{prop}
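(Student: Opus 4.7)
The plan is to apply Proposition \ref{prop:key} to the connected commutative algebraic group $V\times G$, whose Lie algebra is $V\oplus\TT_G$ and whose exponential map is $\pi=\id_V\times\exp_G$. A definable large domain for $\pi$ is $V\times F_G$, where $F_G=F_T\times F_A$ is a large domain for $\exp_G$ definable in $\Rae$, with $F_A$ relatively compact in $\TT_A$ and $F_T=\{\Im z_j\in[-\pi,\pi)\}$ a strip in $\TT_T$. Setting $B'=\St_{V\times G}(Z)$ and $B'_G=\operatorname{pr}_G(B')$, Proposition \ref{prop:key} gives a finite $S\subset V\oplus\TT_G$ with
\[
Y\subseteq \TT_{B'}+S+V\times F_G.
\]
Since $\operatorname{pr}_{\TT_G}(\TT_{B'})=\TT_{B'_G}$ and the $V$-direction is absorbed by $V\times F_G$, this projects to
\[
\operatorname{pr}_{\TT_G}(Y)\subseteq \TT_{B'_G}+S_G+F_G,\qquad S_G:=\operatorname{pr}_{\TT_G}(S).
\]

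\textbf{Key claim: $\operatorname{pr}_{\TT_G}(Y)\subseteq \TT_{B'_G}+v_0$ for some $v_0\in\TT_G$.} I would choose a complex complement $L=L_T\oplus L_A$ of $\TT_{B'_G}$ in $\TT_G$, with $L_T\subseteq \TT_T$ a \emph{rational} complement of $\TT_{B'_G}\cap \TT_T$ (possible since the latter is the Lie algebra of the algebraic subtorus $B'_G\cap T$, hence defined over $\QQ$ with respect to $\ker\exp_T$), and $L_A\subseteq\TT_A$ any complement of the image of $\TT_{B'_G}$ in $\TT_A$. The rationality of $L_T$ makes the projection $p_L\colon\TT_G\to L$ (along $\TT_{B'_G}$) respect the real structure of $\TT_T$, so $p_L(F_G)$ is bounded in every coordinate of $L_A$ and bounded in the imaginary direction in every coordinate of $L_T$ (taken with respect to a basis coming from the real structure). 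Hence $p_L(\operatorname{pr}_{\TT_G}(Y))$ is a constructible subset of $L$ contained in finitely many translates of such a bounded/strip-bounded set. Its projection to any single coordinate of $L$ is a constructible subset of $\CC$ that is either bounded or bounded in the imaginary direction; since constructible subsets of $\CC$ are either finite or cofinite and cofinite ones are unbounded in both real and imaginary directions, each such coordinate projection is finite. Therefore $p_L(\operatorname{pr}_{\TT_G}(Y))$ is itself finite, and the irreducibility of $Y$ forces it to be a single point $v_0$.

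\textbf{Finishing and main obstacle.} Writing $g_0=\exp_G(v_0)$ and $H=(B'_G)^{0}$, the claim yields $\operatorname{pr}_G(Z)\subseteq g_0\cdot H$, so $Z\subseteq V\times(g_0\cdot H)$. To upgrade this to a product, I use the fact that every algebraic subgroup of $V\times G$ splits as a product $H_V\times H_G$, with $H_V\subseteq V$ a vector subspace and $H_G\subseteq G$ an algebraic subgroup, because semi-abelian varieties admit no non-trivial algebraic homomorphisms to or from vector groups (the relevant exact sequence collapses). Applied to $(B')^{0}$, this gives $(B')^{0}=B'_V\times H$, so $\{0\}\times H\subseteq B'$ and $Z$ is invariant under translation by $H$ in the $G$-coordinate. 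Each non-empty fibre $Z_v=\{g:(v,g)\in Z\}$ is then a non-empty $H$-invariant subset of the single coset $g_0\cdot H$, on which $H$ acts transitively; so $Z_v=g_0\cdot H$ for every $v\in Z_V:=\operatorname{pr}_V(Z)$, giving $Z=Z_V\times (g_0\cdot H)$ as required. The main difficulty is the Key Claim: because the semi-abelian fundamental domain $F_G$ is non-compact, one must choose $L$ carefully---in particular taking $L_T$ rational---so that the image of $F_G$ in $L$ remains ``bounded in each coordinate in the right sense'' to make the coordinate-finiteness argument work.
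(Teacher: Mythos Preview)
Your proof is correct and follows the same global architecture as the paper: apply Proposition~\ref{prop:key} to $H=V\times G$, use that every algebraic subgroup of $V\times G$ splits as a product (so $\St_H(Z)=V_0\times B$), show that $Y$ lies in a single translate of $V\oplus\TT_B$, and finish with the fibrewise argument. The genuine difference is in how the Key Claim is established. The paper first enlarges to the $\RR$-linear subspace $M=V+\TT_B+i\RR^k$ so as to absorb the unbounded direction of the fundamental domain, shows $Y-h\subseteq M$ by testing against $\RR$-linear functionals vanishing on $M$ (their complexifications are $\CC$-linear and bounded on $Y$, hence constant), then invokes a separate Fact that the $\RR$-span of an irreducible complex analytic set is a $\CC$-subspace to get $Y-h\subseteq M\cap iM$, and finally computes $M\cap iM=V+\TT_B$ from the rationality of $\TT_B\cap\TT_{G_0}$. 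You instead project directly to a $\CC$-linear complement $L=L_T\oplus L_A$ with $L_T$ chosen rational, and use the dichotomy ``a constructible subset of $\CC$ is finite or cofinite'' together with one-directional boundedness to force each coordinate projection to be finite. Both routes rest on the same rationality of the subtorus; yours trades the $\RR$-span Fact and the $M\cap iM$ computation for an elementary Chevalley observation, which is a nice simplification. Two small points of imprecision worth fixing: $L_A$ should be a subspace of $\TT_G$ projecting isomorphically onto a complement in $\TT_A$ (not literally a subspace of the quotient $\TT_A$), and your $F_A$ should sit in the real span $L_a$ of a lattice complement $\Lambda_a$ (as in the paper's decomposition $\TT_G=\CC^k\oplus L_a$), not in an arbitrary $\CC$-linear complement of $\TT_T$, so that $F_T+F_A$ is genuinely a large domain for $\exp_G$.
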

\begin{rem} Since $Z$ is the Zariski closure of $Y$, it is easy to see
  that if $Z=Z_V\times Z_G$ then $Z_V$ must be the Zariski  closure
of $\pr_V(Y)$ and $Z_G$ must be the Zariski closure of
$\exp_G(\pr_{\TT_G}(Y))$,
where $\pr_V$ and $\pr_{\TT_G}$ are the projections from
$V\oplus\TT_G$ to $V$ and $\TT_G$ respectively.
\end{rem}

Before proving the propositioin let's remark how it implies both
versions  of
ALW. To get ALW  we take $V$ to be the trivial vector group $0$.
To get full ALW  we take $V=\TT_G$ and $Y\subseteq \TT_G\oplus \TT_G$
the image of $X$ under the diagonal map, i.e.  $Y=\{ (u,u)\in \TT_G\oplus \TT_G \colon u\in X\}$.

\medskip
We now proceed with the proof of Proposition \ref{prop:gen}.

\begin{proof}
  Let $H=V\times G$. It is a commutative algebraic group with the Lie
algebra $\TT_H=V\oplus \TT_G$ and with the exponential map
$\exp_H=\pi$.  Hence  $Z$ is the Zariski closure of
$\exp_H(Y)$.

We denote the group operation of $H$ by $\cdot$, and view $V$ and $G$
as subgroups of $H$. Very often for subsets $S_1\subseteq V$ and
$S_2\subseteq G$ we write $S_1\times S_2$ instead of $S_1\cdot S_2$ to indicate
that in this case $S_1\cdot S_2$ can be also viewed  as  the Cartesian product of
$S_1$ and $S_2$.

Notice that since $\exp_H$ restricted to $V$ is the identity map we
have  $\exp_H^{-1}(e)=\exp_G^{-1}(e)$.

Let $\St_H(Z)$ be the stabilizer of $Z$ in $H$. It is an algebraic
subgroup of $V\times G$. Since $V$ is a vector group and $G$ is a
semi-abelian variety, $\St_H(Z)$ splits as  $\St_H(Z)=V_0\times B$,
where  $V_0<V$ and $B<G$ are algebraic subgroups.

We first show that $Z\subseteq V\times (p\cdot B)$ for some $p\in G$.

\begin{lem}\label{lem:lemma-1}
We have $ Y-h\subseteq
V+\TT_B \text{ for some } h\in \TT_H, $
where $\TT_B<\TT_G$ is the Lie algebra of $B$.
\end{lem}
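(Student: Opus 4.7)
The plan is to apply Proposition \ref{prop:key} to $\exp_H\colon \TT_H\to H$ and then pass to the quotient $\tilde G := G/B^0$, exploiting the semi-abelian structure of $G$. Choose a large definable domain $F_H = V\times F_G$ for $\exp_H$, where $F_G$ is taken definable in $\Rae$ and respects the extension $0\to T\to G\to A\to 0$: namely $F_G = F_T\times F_A$ with $F_A\subseteq \TT_A$ relatively compact and $F_T\subseteq \TT_T$ a strip of bounded imaginary part in a real basis of $\TT_T$ compatible with the torus structure. Since $\St_H(Z) = V_0\times B$ has Lie algebra $V_0\oplus\TT_B$, Proposition \ref{prop:key} yields a finite $S\subseteq \TT_H$ with
\[
Y \subseteq V_0+\TT_B+S+F_H.
\]

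Now consider the projection $q\colon V\oplus\TT_G \to \TT_G/\TT_B$, which kills both $V$ and $\TT_B$, and set $\bar Y := q(Y)$, $\bar F := q(F_G)$. The display above yields $\bar Y \subseteq q(S)+\bar F$ with $q(S)$ finite. Since $B$ is semi-abelian, $\tilde G = G/B^0$ is again semi-abelian and $\TT_{\tilde G} := \TT_G/\TT_B$ decomposes as $\TT_{\tilde T}\oplus \TT_{\tilde A}$, where $\tilde T$ and $\tilde A$ are the torus and abelian parts of $\tilde G$. Choosing the splitting compatibly with the $\QQ$-rational structure of $\TT_{T\cap B^0}\subseteq \TT_T$, the induced map $\TT_T\to \TT_{\tilde T}$ is represented by a real matrix. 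Consequently $\bar F$ is bounded in its $\TT_{\tilde A}$-component (since $F_A$ is relatively compact) and has bounded imaginary part in its $\TT_{\tilde T}$-component (since $F_T$ has bounded imaginary part and the projection is real-linear); the same boundedness properties then hold for $\bar Y$.

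Finally, $\bar Y$ is the image of the irreducible algebraic variety $Y$ under the linear map $q$, hence an irreducible constructible subset of $\TT_{\tilde G}$, Zariski-dense in its Zariski closure $W\subseteq \TT_{\tilde G}$. If some coordinate function $f$ on $W$ were nonconstant, then $f(W)$ would be an infinite constructible subset of $\CC$, hence cofinite, and the same would be true of $f(\bar Y)$ (otherwise $\bar Y$ would lie in finitely many fibers of $f$, contradicting its Zariski density in $W$). But a cofinite subset of $\CC$ is unbounded and has unbounded imaginary part, contradicting the established boundedness conclusions for $\bar Y$. Hence every coordinate on $W$ is constant, so $\bar Y$ is a single point; any $h\in \TT_H$ with $q(h)$ equal to that point gives $Y-h\subseteq V+\TT_B$. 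The step I expect to be most delicate is the compatibility of the choice of $F_G$ and of bases with the semi-abelian structure of $B$, which is what makes the bounded-imaginary-part argument in the torus direction go through; once this is arranged, the irreducibility plus Chevalley argument is routine.
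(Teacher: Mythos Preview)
Your argument is correct, and it reaches the same conclusion from the same two inputs the paper uses --- Proposition~\ref{prop:key} applied to $\exp_H$, and the fact that $\TT_B\cap\TT_T$ is the Lie algebra of a subtorus of $T$, hence of the form $E\oplus iE$ for a rational real subspace $E$ --- but it organizes them differently. The paper stays inside $\TT_H$: it sets $M=V+\TT_B+i\RR^k$, shows $Y-h\subseteq M$ by the same ``bounded real part plus irreducibility forces constancy'' mechanism you use, then invokes a general fact about irreducible complex analytic sets lying in a real subspace to get $Y-h\subseteq M\cap iM$, and finally computes $M\cap iM=V+\TT_B$ directly from $\TT_B\cap\CC^k=E\oplus iE$. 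You instead quotient by $V+\TT_B$ at the outset and work in $\TT_{G/B^0}$; the rationality of $\TT_B\cap\TT_T$ enters earlier, as the reason the induced map $\TT_T\to\TT_{\tilde T}$ respects the real structure and hence preserves ``bounded real/imaginary part'', and then a single Chevalley-plus-cofiniteness step finishes. Your packaging is a bit more streamlined (one reduction rather than the two-step $M$, $M\cap iM$ argument) and makes the passage to $G/B^0$ explicit; the paper's version keeps everything in the original coordinates and isolates the $M\cap iM$ computation as a clean linear-algebra identity. The delicate compatibility you flag is exactly the content of the paper's equation showing $M\cap iM=V+\TT_B$, just seen from the quotient side; once $\TT_B\cap\TT_T=E\oplus iE$ is known, both go through without difficulty.
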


\begin{proof}[Proof of Lemma]

Since $G$ is a connected  semi-abelian variety
it admits a short exact sequence
\[ e\to G_0 \to G \to A \to e,\]
where $A$ is an abelian variety and $G_0$ is  an algebraic torus,
i.e. an algebraic group isomorphic to
$(\CC^*,\cdot)^k$.

We do a standard decomposition of $\TT_G$.

  Let $d$ be the dimension of $G$ and $k$  the dimension of
  $G_0$.  Let $\Lambda=\exp_G^{-1}(e)$.  It is a discrete subgroup of $\TT_G$
  whose $\CC$-span is $\TT_G$. Also   $\Lambda$ is a
  free abelian group of rank $2d-k$.

Let $\TT_0<\TT_G$ be the Lie algebra of $G_0$. It is a
$\CC$-linear subspace of $\TT_G$ of dimension $k$.
Let $\Lambda_0=\Lambda\cap \TT_0$. It is easy to see that $\Lambda_0$
 is a pure subgroup of  $\Lambda$ (i.e. for $\lambda\in \Lambda$ and
 $n\in \NN$, $n\lambda\in \Lambda_0$ implies $\lambda\in \Lambda_0$) , hence it  has
  a complementary
subgroup $\Lambda_a$ in $\Lambda$, i.e. a subgroup $\Lambda_a$ of $\Lambda$ with $\Lambda=\Lambda_0\oplus
\Lambda_a$.  Let $L_a< \TT_G$  be the $\RR$-span of $\Lambda_a$.

We have  that $\TT_G=\TT_0\oplus L_a$, and $\Lambda_a$ is a lattice in
$L_a$.

The restriction of $\exp_G$ to $\TT_0$ is a complex Lie group
homomorphism from $\TT_0$ onto $G_0$ whose kernel is $\Lambda_0$.
Choosing an appropriate basis for $\TT_0$ and after identifying  $G_0$
with $(\CC^*,\cdot)^k$, we may assume that $\TT_0=\CC^k$ and the
restriction of $\exp_G$ to $\TT_0$ has form
$(z_1,\dotsc,z_k)\mapsto (e^{2\pi i z_1},\dotsc, e^{2\pi i z_k})$.
In particular $\Lambda_0=\ZZ^k$ and the restriction of $\exp_G$ to
$i\RR^k$ is definable in $\RR_\mathrm{exp}$.

From now on we identify $\TT_0$ with $\CC^k$ and  use decompositions
\[\TT_G=
\CC^k\oplus L_a=\RR^k\oplus i\RR^k\oplus L_a
\text{ and }
\TT_H=V\oplus\RR^k\oplus i\RR^k\oplus L_a.
\]

Since both $L_a/\Lambda_a$ and $\RR^k/\ZZ^k$ are compact we can choose
relatively compact large domains $F_a \subseteq L_a$ and $F_0\subseteq \RR^k$ for
$\exp_G{\restriction} L_a$  and $\exp_G{\restriction} \RR^k$
respectively, definable in $\Ran$

It is easy to see that $F_0+ i\RR^k+F_a$  is a large
domain for $\exp_G$  and $F=V+F_0+ i\RR^k+F_a$ is a large domain for
$\exp_H$, both  definable
in $\RR_\mathrm{an,exp}$.

\medskip

Let $\TT_B<\TT_H$ be the Lie algebra of $B$. Since
$\exp_H^{-1}(e)=\exp_G^{-1}(e)=\Lambda$, we apply  Proposition
\ref{prop:key} to $Y$ and $\exp_H$ and get a finite
$S\subset\TT_H$ with
$Y\subseteq \TT_B+S+F$.
Thus we have
\[
Y\subseteq \TT_B+S+F
= V+
\TT_B+S+F_0+i\RR^k +F_a.
\]

Since the  closures of $F_0$ and $F_a$ are compact, we can find a compact
subset $K\subseteq \TT_H$ with $S+F_0 +F_a \subseteq K$, and hence
\begin{equation}
  \label{eq:2}
 Y\subseteq V+\TT_B+i\RR^k +K.
\end{equation}

Let $M= V+\TT_B+i\RR^k$. It is an $\RR$-linear subspace of $\TT_H$. We
first claim that $Y\subseteq M+h$ for some $h\in \TT_H$. Indeed,
using elementary linear algebra it is sufficient to show
that for any $\RR$-linear map $\xi\colon \TT_H\to \RR$ vanishing on
$M$ the image of
$Y$ under $\xi$ is a point.   Let $\xi\colon \TT_H\to \RR$ be an
$\RR$-linear map vanishing on $M$.  From \eqref{eq:2} we obtain  that
$\xi(Y)$ is bounded.  Therefore, since $Y$ is an irreducible algebraic
variety and the map $\bar \xi\colon \TT_H\to \CC$ given by $\bar
\xi\colon z\mapsto \xi(z)-i\xi(z)$
is a  $\CC$-linear map,  the set $\xi(Y)$ must be a point.
Thus we have $Y\subseteq M+h$ for some $h\in \TT_H$.

We will use the following fact that it is not difficult to prove.

\begin{fact} Let $Y' \subseteq \TT_H$ be an irreducible complex
  analytic subset.  If $W\subseteq\TT_H$ is the $\RR$-span
  of $Y'$ (i.e. the smallest $\RR$-linear subspace containing $Y'$)  then $W$ is a $\CC$-linear subspace of $\TT_H$.

In particular if $Y'\subseteq U$ for some $\RR$-linear subspace $U$ of
$\TT_H$ then $Y' \subseteq iU$.
\end{fact}

Applying  the above fact to $Y'=Y-h$ we obtain
\begin{equation}
  \label{eq:5}
Y-h\subseteq M\cap iM= (V+\TT_B+i\RR^k)\cap (V+\TT_B+\RR^k).
\end{equation}

Thus to finish the proof of Lemma,  it remains  to show that
\begin{equation}
  \label{eq:3}
 (V+\TT_B+i\RR^k)\cap (V+\TT_B+\RR^k)= V+\TT_B.
\end{equation}

 Since $B$ is a
semi-abelian subvariety of $G$, the intersection $B_1=B\cap G_0$ is an
algebraic torus with the Lie algebra  $\TT_{B_1}=\TT_B\cap \CC^k$.
Since $B_1$ is an algebraic subtorus of $G_0$, $\TT_{B_1}$ has a
  $\CC$-basis in $\Lambda\cap \CC^k=\ZZ^k\subset \RR^k$.

It follows then that
$\TT_{B_1}$ has form $E\oplus iE$ for some $\RR$-linear subspace $E\subseteq
\RR^k$, and
 hence
\[ \TT_B\cap (\RR^k+ i\RR^k)=E\oplus  iE. \]

We are now ready to show \eqref{eq:3}. Let $\alpha\in
(V+\TT_B+i\RR^k)\cap (V+\TT_B+\RR^k)$.  Then
\[ \alpha=v_1+u_1+w_1=v_2+ u_2+iw_2 \]
 for some $v_1, v_2\in V, u_1,u_2\in \TT_B, w_1,w_2\in \RR^k$.
Since
$\TT_H=V\oplus \TT_G$, we get $v_1=v_2$ , and  $(u_1-u_2)=-w_1+iw_2$.

Thus $-w_1+iw_2\in \TT_B\cap (\RR^k+ i\RR^k)=E\oplus  iE$, so
$w_1,w_2\in E$, and hence $w_1,w_2\in \TT_B$. It implies that
$\alpha\in V+\TT_B$, that shows \eqref{eq:3}.
It finishes the proof of Lemma.
\end{proof}

We choose $p\in G$ with $V\cdot \exp_H(h)=V \cdot p$ and obtain
\[ \exp_H(Y)\subseteq V\times  (p\cdot B) \text{ for some }p\in G. \]
hence
\begin{equation}
  \label{eq:6}
Z \subseteq V\times  (p\cdot B).
\end{equation}

Let $Z_V=\{ v\in V \colon v\times  p\in Z\}$.  It is an algebraic subvariety of $V$
and we claim that $Z=Z_V\times (p\cdot B)$.

If $v\in Z_V$ then $v\cdot p\in Z$, and since $B$ lies in the stabilizer of $Z$ we
have $v\times (p\cdot B)\subseteq Z$. Hence $Z_V\times (p\cdot B) \subseteq Z$.

Let $v\in V, g\in G$ with $v\cdot g\in Z$.  Since $B$ lies in the stabilizer of $Z$
we have $v\times (g\cdot B ) \subseteq Z$. By \eqref{eq:6}, $v\times (g\cdot
B)\subseteq V\times (p\cdot B)$, hence  $g\cdot B=p\cdot B$, $v\cdot p \in Z$, $v\in
Z_V$ and $v\cdot g \in Z_v\times (p\cdot B)$.  It shows that $Z\subseteq
Z_V\times(p\cdot B)$.

\end{proof}

\bibliographystyle{acm}

\bibliography{flow-paper}

\end{document}